\newcommand\sppt{\mathrm{isupp}}
  \definecolor{green4}{rgb}{0.247,0.498,0.208}
\newcommand{\bi}{\begin{itemize}}
\newcommand{\ei}{\end{itemize}}
\newcommand{\be}{\begin{enumerate}}
\newcommand{\ee}{\end{enumerate}}
\newcommand{\bc}{\begin{center}}
\newcommand{\ec}{\end{center}}
\newcommand{\bt}{\begin{tabular}}
\newcommand{\et}{\end{tabular}}
\newcommand{\ba}{\begin{array}}
\newcommand{\ea}{\end{array}}
\newcommand{\cC}{\varsigma}
\newenvironment{proof*}[1]
  {%
   \begin{proof}}
  {\end{proof}}
\newtheorem{thm}{Theorem}[section]
\theoremstyle{definition}
\newtheorem{defn}[thm]{Definition}
\newcommand\Z{\mathbb Z}
\newcommand\N{\mathbb N}
\newcommand\h{x}
\renewcommand{\geq}{\geqslant} \renewcommand{\leq}{\leqslant}
\title[Cayley automatic is closed under taking wreath product with virtually cyclic]{Being Cayley automatic is closed under taking \\wreath product with virtually cyclic groups}
\author[D. Berdinsky]{Dmitry Berdinsky\textsuperscript{a,b}}
\address{\textsuperscript{a}Department of Mathematics,
		Faculty of Science,
		Mahidol
		University, Bangkok, 10400, Thailand
		\textsuperscript{b}Centre of Excellence in Mathematics,
		Commission on Higher Education, Bangkok, 10400, Thailand}
\email{berdinsky@gmail.com}
\author[M. Elder]{Murray Elder}\thanks{The second author is supported by Australian Research Council grant DP160100486}
\address{School of Mathematical and Physical Sciences, University of Technology Sydney, Ultimo, NSW 2007, Australia}
\email{murray.elder@uts.edu.au}
\author[J. Taback]{Jennifer Taback}\thanks{The third author acknowledges support from Simons Foundation grant 31736 to Bowdoin College.}
\address{Department of Mathematics,
Bowdoin College, 8600 College Station, Brunswick, ME 04011, USA} \email{jtaback@bowdoin.edu}
\date{\today}
\subjclass[2020]{20F65, 20E22,   68Q45}
\keywords{Cayley automatic group; restricted  wreath product; virtually infinite cyclic group; lamplighter group}
\begin{document}

\begin{abstract}
We extend work of the first author and Khoussainov to show that being Cayley automatic is closed under taking the  restricted wreath product with a virtually infinite cyclic group. This adds to the list of known examples of Cayley automatic groups.

\end{abstract}

\maketitle

\section{Introduction}
Cayley automatic groups, introduced by Kharlampovich, Khoussainov
and Miasnikov in \cite{KKMjournal}, generalise the class of automatic groups while retaining some key algorithmic properties.  Namely, the
word problem in a Cayley automatic group is decidable in quadratic time, and the first order theory for a (directed, labeled) Cayley graph of a Cayley automatic group is decidable. 
The family of Cayley automatic groups is larger than that of automatic groups, for example it  includes all finitely generated nilpotent
groups of nilpotency class
two \cite{KKMjournal}, the Baumslag-Solitar groups \cite{BerK-BS,KKMjournal}, the
higher rank lamplighter groups \cite{Taback18}, and restricted wreath products of the form $G\wr \Z$ where $G$ is Cayley automatic  \cite{berdkhouss15}.

Here we add to this list by extending \cite{berdkhouss15} to restricted wreath products of the form $G\wr H$ where $G$ is Cayley automatic and $H$ is virtually infinite cyclic. 
While this result is not surprising, the proof contains some subtleties which require care, and we believe is worth recording.

\section{Automatic and Cayley automatic groups}
\label{sec:Auto-CGA}

We assume that the reader is familiar with the notions of regular languages, finite automata and  multi-tape synchronous automata.  For more details, we refer the reader to \cite{Epsteinbook}. We say a language $L\subseteq (X^*)^n$ is {\em regular} if it is accepted by a synchronous $n$-tape automaton where $n\in\N$ and $X$ is a finite set, or {\em alphabet}.

For any group $G$ with finite symmetric generating set $S=S^{-1}$, let $\pi\colon S^*\to G$ denote the canonical projection map. For $w\in S^*$ let $|w|_S$ denote the length of $w$ as a word in the free monoid $S^*$, that is, $|w|_S$ denotes the number of letters in the word $w$.

\begin{defn}
\label{def:aut}
An {\em automatic structure} for a group $G$ is a pair $(S,L)$ where
\be
\item $S$ is a finite symmetric generating set for $G$;
\item $L\subseteq S^*$ is a regular language;
\item  $\pi|_L \colon L \rightarrow G$ is
a bijection;
\item for each $a \in S$ the binary relation
$$R_a = \{(u,v) \in L \times L \mid \pi(u)a=_G\pi(v)\} \subseteq S^* \times S^*$$
is regular, that is, recognised by a two-tape synchronous automaton.
\ee
A group is called {\em automatic} if it has an automatic structure with respect to some finite generating set.
\end{defn} It is a standard result, see, for example \cite[Theorem 2.4.1]{Epsteinbook}, that if $G$ is automatic  then $G$ has an automatic structure with respect to any finite generating set.

Cayley automatic groups were introduced in \cite{KKMjournal} with the motivation of
allowing the language $L$ of normal forms representing group elements to be defined over an arbitrary
alphabet $\Lambda$ rather than a generating set $S$ for $G$.

\begin{defn}
\label{def:Caut}
A {\em Cayley automatic structure} for a group  $G$ is a 4-tuple  $(S,\Lambda, L,\psi)$ where
\be
\item $S$ is a finite symmetric generating set for $G$;
\item  $\Lambda$ is an alphabet and
 $L \subseteq \Lambda^*$  is a regular
language;
\item
$\psi\colon L \rightarrow G$ is a bijection;
\item
for each $a \in S$
the binary relation
$$R_a = \{(u,v) \in L \times L \,
|\,\psi(u)a=_G\psi(v)\} \subseteq \Lambda^* \times \Lambda^*$$
is regular, that is, recognised by
a two-tape synchronous automaton. \ee

A group is called {\em Cayley automatic} if it has a Cayley automatic structure   $(S,\Lambda, L,\psi)$  with respect to some finite generating set $S$.\end{defn}
As for automatic groups, if $G$ has a  Cayley automatic structure    $(S,\Lambda, L,\psi)$
and $Y$ is another finite generating set for $G$, then there exists a Cayley automatic structure    $(Y,\Lambda_Y, L_Y,\psi_Y)$ for $G$. See \cite[Theorem 6.9]{KKMjournal} for a proof of this fact.

    \section{Wreath products with virtually infinite cyclic groups}

   For two groups $G$ and $H$,
   let $G^{(H)}$ be the set of all functions $\gamma\colon H \rightarrow G$ 
   with finite support, that is,
    such that $\gamma(h)\neq 1_G$ for at most finitely many $h\in H$.
   For a given $\gamma \in G^{(H)}$ and $h \in H$, 
   we denote by $\gamma^h$ 
   the element of $G^{(H)}$ for which 
   $\gamma^h (x) = \gamma(hx)$ for all $x \in H$.
   The restricted wreath product
    $G \mathrel{\wr} H$ can be defined as the 
    Cartesian product 
   $G^{(H)} \times H$ with the group multiplication given by
   the formula:  
   \[(\gamma, h )  \cdot ( \gamma', h' )    = 
     \left(\gamma (\gamma')^{\, h^{-1}},  h h' \right).\]

     Equivalently, we can define 
$G\wr H$ as \[\left\{(\gamma,h) \mid h\in H , \gamma \in \bigoplus_{k \in H} (G)_k \text{ where $\gamma$ has finitely many nontrivial entries}\right\}\] with multiplication defined as above, 
where 
\[(\gamma')^{\, h^{-1}}=\left(\bigoplus_{k \in H} (g)_k\right)^{\, h^{-1}}=\bigoplus_{k \in H} (g)_{h^{-1}k}.\] Note that if $G$ is generated by $S_0\subseteq G$ and $H$ is generated by $T\subseteq H$ then $G\wr H$ is generated by $S_0\cup T$.

We prove the following theorem.

\begin{thm}[Wreath products with virtually infinite cyclic groups]
\label{thmC:wreath}
Let $G$ be a Cayley automatic group, and $H$ any virtually infinite cyclic group.  Then $G \wr H$ is Cayley automatic.\end{thm}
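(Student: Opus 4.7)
Since $H$ is virtually infinite cyclic, it contains a normal subgroup $K\triangleleft H$ with $K\cong\Z$ and finite index $n:=[H:K]$ (for instance, take the normal core of any infinite-cyclic subgroup of finite index in $H$). Fix a transversal $t_1=1_H,t_2,\ldots,t_n$ for $K$ in $H$, so that $H=\bigsqcup_{i=1}^n Kt_i$. My plan is to reduce to the theorem of the first author and Khoussainov \cite{berdkhouss15} via a structural observation about $G\wr H$, and then close up under finite extensions.

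First, I would show that the subgroup $N:=G^{(H)}\rtimes K$ of $G\wr H=G^{(H)}\rtimes H$ has index $n$ and is isomorphic to $G^n\wr\Z$. The map
\[ \Phi\colon G^{(H)}\longrightarrow (G^n)^{(K)},\qquad \Phi(\gamma)(k)=\bigl(\gamma(kt_1),\ldots,\gamma(kt_n)\bigr), \]
is a group isomorphism because of the direct-sum decomposition $G^{(H)}=\bigoplus_{i=1}^n\bigoplus_{k\in K}(G)_{kt_i}$. A short calculation using $\gamma^h(x)=\gamma(hx)$ shows $\Phi(\gamma^{k^{-1}})=\Phi(\gamma)^{k^{-1}}$ for $k\in K$, so $\Phi$ is $K$-equivariant and extends to an isomorphism $N\cong (G^n)^{(K)}\rtimes K=G^n\wr K\cong G^n\wr\Z$.

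Next, I would invoke the closure of Cayley automaticity under finite direct products to conclude that $G^n$ is Cayley automatic, and then apply \cite{berdkhouss15} to obtain that $N\cong G^n\wr\Z$ is Cayley automatic. Finally, since $N$ has finite index in $G\wr H$, I would appeal to closure of the class of Cayley automatic groups under finite extensions to conclude that $G\wr H$ is Cayley automatic.

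The main obstacle I would expect is ensuring that each invoked closure property is available in precisely the form needed, which is most likely where the ``subtleties'' mentioned by the authors enter. Closure under finite direct products is routine (pair automatic structures coordinatewise); closure under finite extensions requires a coset-based argument generalising the classical automatic case, and must be cited carefully. If that closure is not available off the shelf, a fallback is a direct construction: represent $(\gamma,h)\in G\wr H$ by the tuple $\Phi(\gamma)\in(G^{(K)})^n$ together with the pair $(k,i)$ giving $h=kt_i$, and then verify regularity of every multiplier relation by adapting the construction in \cite{berdkhouss15} for $G^n\wr K$, with careful bookkeeping for how a generator of $H$ lying outside $K$ permutes the $t_i$-components.
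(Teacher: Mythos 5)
Your argument is correct, but it takes a genuinely different route from the paper. You reduce structurally: choosing an infinite cyclic subgroup $K\leq H$ of finite index $n$ (normality is not even needed for your $\Phi$, only the right-coset decomposition $H=\bigsqcup_i Kt_i$), you identify the index-$n$ subgroup $G^{(H)}\rtimes K\leq G\wr H$ with $G^n\wr\Z$ via the $K$-equivariant isomorphism $\Phi$ --- your equivariance computation $\Phi(\gamma^{k^{-1}})=\Phi(\gamma)^{k^{-1}}$ is consistent with the convention $\gamma^h(x)=\gamma(hx)$ used here --- and then invoke three black boxes: closure under finite direct products, the $G\wr\Z$ theorem of \cite{berdkhouss15}, and closure under finite extensions. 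The closure properties you worry about are in fact available off the shelf in \cite{KKMjournal} (direct products, and the statement that a finite extension of a Cayley automatic group is Cayley automatic), so your plan does go through and is considerably shorter. The paper instead gives a single self-contained construction: it builds an explicit normal form for $G\wr H$ directly, using a ``spine and spokes'' spanning tree of $\Gamma(H,T)$, a left-first lamplighter traversal, and marker symbols $B_0$ and $C_*$ recording where the traversal crosses $1_H$ and where the lamplighter stops, then verifies regularity of each multiplier relation by hand. What the direct construction buys is an explicit, geometrically meaningful language and multiplier automata for $G\wr H$ itself (useful, e.g., for analysing the resulting representation), and independence from the finite-extension theorem, whose proof is itself a coset-bookkeeping argument of roughly the same flavour as the one your reduction would otherwise have to absorb. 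What your reduction buys is brevity and modularity. If you write your version up, the only points needing care are precisely the ones you flag: cite the finite-extension and direct-product closure results explicitly, and check that the quoted form of the $G\wr\Z$ theorem applies to $G^n$ with the product Cayley automatic structure.
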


\begin{proof}
Since $G$ is Cayley automatic, there exists a finite symmetric generating set $S_0$ for $G$, 
 an alphabet   $\Lambda_0$, 
 a regular language  $L_0 \subseteq \Lambda_0^{*}$,  a bijection $\psi_0\colon L_0\to G$, and a 2-tape automaton $\texttt{M}_s$  for each $s\in S_0$ with accepted language $$L(\texttt{M}_s)=\{(u,v)\in L_0\times L_0\mid \psi_0(v)=_G \psi_0(u)s\}.$$
Without loss of generality assume $\psi_0(\varepsilon)=1_G$.

Let $H$ be a finite extension of its cyclic subgroup $\mathbb{Z} = \langle t \rangle$ of index $m+1$,
and denote by $ \langle t \rangle \h_0,  \langle t \rangle \h_1, \dots, \langle t \rangle \h_m$ the
distinct  right cosets of $\Z$, where $\h_0 =1_H$.
Let \[T = \{t, \h_1,\dots,\h_m, t^{-1}, \h_1^{-1}, \dots, \h_m^{-1}\};\]
then  $S = S_0 \cup T$ is a symmetric generating set for $G\wr H$.
We identify a particular spanning tree $\mathcal S$ of the Cayley graph $\Gamma(H,T)$ which consists of a ``spine'' corresponding to $\langle t \rangle$, and at each vertex $t^k$  there are $m$ ``spokes'' terminating at the $m$ vertices $t^kx_j$ of $H$, for $k\in \Z$ and  $1\leq j\leq m$, as in Figure~\ref{fig:spinespine}.

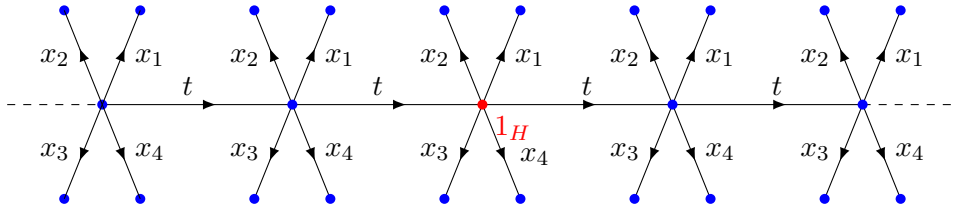
\begin{figure}[h!]
  \centering
  \begin{tikzpicture}[scale=1.25]

 \begin{scope}[decoration={markings,mark = at position 0.6 with {\arrow[scale=1.25,black]{latex}}}]

       \draw[dashed] (-5,0) -- (-4,0);
      \draw[dashed] (4,0) -- (5,0);
  
       \draw[postaction={decorate}] (-4,0) -- (-2,0);

        \draw[postaction={decorate}] (-2,0) -- (0,0);

  \draw[postaction={decorate}]
         (0,0) -- (2,0);

        \draw[postaction={decorate}]
         (2,0) -- ( 4,0);

\draw (.5,.5) node {$x_1$}; 
\draw (.55,-.55) node {$x_4$}; 
\draw (-.5,.5) node {$x_2$}; 
\draw (-.5,-.5) node {$x_3$}; 
\draw (1.1,.2) node {$t$}; 

\draw (2.5,.5) node {$x_1$}; 
\draw (2.5,-.5) node {$x_4$}; 
\draw (1.5,.5) node {$x_2$}; 
\draw (1.5,-.5) node {$x_3$}; 
\draw (3.1,.2) node {$t$}; 

\draw (4.5,.5) node {$x_1$}; 
\draw (4.5,-.5) node {$x_4$}; 
\draw (3.5,.5) node {$x_2$}; 
\draw (3.5,-.5) node {$x_3$}; 

\draw (-1.1,.2) node {$t$}; 
\draw (-2.5,.5) node {$x_2$}; 
\draw (-2.5,-.5) node {$x_3$}; 
\draw (-1.5,.5) node {$x_1$}; 
\draw (-1.5,-.5) node {$x_4$}; 
\draw (-3.1,.2) node {$t$}; 

\draw (-4.5,.5) node {$x_2$}; 
\draw (-4.5,-.5) node {$x_3$}; 
\draw (-3.5,.5) node {$x_1$}; 
\draw (-3.5,-.5) node {$x_4$}; 

      \draw[postaction={decorate}]
         (0,0) -- ( .4,1);
               \draw[postaction={decorate}]
         (0,0) -- ( .4,-1);
               \draw[postaction={decorate}]
         (0,0) -- ( -.4,1);
               \draw[postaction={decorate}]
         (0,0) -- ( -.4,-1);

   \fill[radius=1.5pt,blue]
    (.4,1) circle ;
       \fill[radius=1.5pt,blue]
    (-.4,1) circle ;
       \fill[radius=1.5pt,blue]
    (.4,-1) circle ;
   \fill[radius=1.5pt,blue]
    (-.4,-1) circle ;

               \draw[postaction={decorate}]
         (2,0) -- ( 2.4,1);
               \draw[postaction={decorate}]
         (2,0) -- ( 2.4,-1);
               \draw[postaction={decorate}]
         (2,0) -- ( 1.6,1);
               \draw[postaction={decorate}]
         (2,0) -- ( 1.6,-1);
            \fill[radius=1.5pt,blue]
    (2.4,1) circle ;
       \fill[radius=1.5pt,blue]
    (1.6,1) circle ;
       \fill[radius=1.5pt,blue]
    (2.4,-1) circle ;
   \fill[radius=1.5pt,blue]
    (1.6,-1) circle ;

      \draw[postaction={decorate}]
         (4,0) -- ( 4.4,1);
               \draw[postaction={decorate}]
         (4,0) -- ( 4.4,-1);
               \draw[postaction={decorate}]
         (4,0) -- ( 3.6,1);
               \draw[postaction={decorate}]
         (4,0) -- ( 3.6,-1);
            \fill[radius=1.5pt,blue]
    (4.4,1) circle ;
       \fill[radius=1.5pt,blue]
    (3.6,1) circle ;
       \fill[radius=1.5pt,blue]
    (4.4,-1) circle ;
   \fill[radius=1.5pt,blue]
    (3.6,-1) circle ;

      \draw[postaction={decorate}]
         (-2,0) -- ( -2.4,1);
               \draw[postaction={decorate}]
         (-2,0) -- ( -2.4,-1);
               \draw[postaction={decorate}]
         (-2,0) -- ( -1.6,1);
               \draw[postaction={decorate}]
         (-2,0) -- ( -1.6,-1);
            \fill[radius=1.5pt,blue]
    (-2.4,1) circle ;
       \fill[radius=1.5pt,blue]
    (-1.6,1) circle ;
       \fill[radius=1.5pt,blue]
    (-2.4,-1) circle ;
   \fill[radius=1.5pt,blue]
    (-1.6,-1) circle ;

   \fill[radius=1.5pt,red]
    (0,0) circle node[below right =1pt] {$1_H$};

   \fill[radius=1.5pt,blue]
    (2,0) circle ;

   \fill[radius=1.5pt,blue]
    (4,0) circle ;

   \fill[radius=1.5pt,blue]
    (-2,0) circle ;

   \fill[radius=1.5pt,blue]
    (-4,0) circle ;

            \fill[radius=1.5pt,blue]
    (-4.4,1) circle ;
       \fill[radius=1.5pt,blue]
    (-3.6,1) circle ;
       \fill[radius=1.5pt,blue]
    (-4.4,-1) circle ;
   \fill[radius=1.5pt,blue]
    (-3.6,-1) circle ;
      \draw[postaction={decorate}]
         (-4,0) -- ( -4.4,1);
               \draw[postaction={decorate}]
         (-4,0) -- ( -4.4,-1);
               \draw[postaction={decorate}]
         (-4,0) -- ( -3.6,1);
               \draw[postaction={decorate}]
         (-4,0) -- ( -3.6,-1);

     \end{scope}
\end{tikzpicture}
 \caption{Part of a spanning tree $\mathcal S$ for $\Gamma(H,T)$, where the index of $\Z=\langle t\rangle$ in $H$ is 5.}
 \label{fig:spinespine}
 \end{figure}

As a concrete example, consider the infinite dihedral group \[H=D_\infty=\langle a,b \mid a^2,b^2\rangle\] In this case we can take $t=ab, x_1=a$ and $\mathcal S= \Gamma(D_\infty,T)$, as shown in Figure~\ref{fig:spineDihedral}.

\begin{figure}[h!]
  \centering
  \begin{tikzpicture}[scale=1.25]

 \begin{scope}[decoration={markings,mark = at position 0.6 with {\arrow[scale=1.25,black]{latex}}}]

       \draw[dashed] (-5,0) -- (-4,0);
      \draw[dashed] (4,0) -- (5,0);
  
       \draw[postaction={decorate}] (-4,0) -- (-2,0);

        \draw[postaction={decorate}] (-2,0) -- (0,0);

  \draw[postaction={decorate}]
            (0,0) -- (2,0);    

        \draw[postaction={decorate}]
         (2,0) -- ( 4,0);

\draw (.25,.25) node {$a$}; 
\draw (1.1,-.2) node {$t$}; 

\draw (2.25,.25) node {$a$}; 
\draw (3.1,-.2) node {$t$}; 

\draw (4.25,.25) node {$a$}; 
\draw (-.9,-.2) node {$t$};

\draw (-1.75,.25) node {$a$}; 
\draw (-2.9,-.2) node {$t$};

\draw (-3.75,.25) node {$a$};

      \draw[postaction={decorate}]
         (0,0) -- ( .2,.5);

   \fill[radius=1.5pt,blue]
    (.2,.5) circle ;

               \draw[postaction={decorate}]
         (2,0) -- ( 2.2,.5);

            \fill[radius=1.5pt,blue]
    (2.2,.5) circle ;

      \draw[postaction={decorate}]
         (4,0) -- ( 4.2,.5);

            \fill[radius=1.5pt,blue]
    (4.2,.5) circle ;

      \draw[postaction={decorate}]
         (-2,0) -- ( -1.8,.5);

            \fill[radius=1.5pt,blue]
    (-1.8,.5) circle ;

   \fill[radius=1.5pt,red]
    (0,0) circle node[below right =1pt] {$1_{D_\infty}$};

   \fill[radius=1.5pt,blue]
    (2,0) circle ;

   \fill[radius=1.5pt,blue]
    (4,0) circle ;

   \fill[radius=1.5pt,blue]
    (-2,0) circle ;

   \fill[radius=1.5pt,blue]
    (-4,0) circle ;

            \fill[radius=1.5pt,blue]
    (-3.8,.5) circle ;
      \draw[postaction={decorate}]
         (-4,0) -- ( -3.8,.5);
%

     \end{scope}

 \begin{scope}[decoration={markings,mark = at position 0.5 with {\arrow[scale=1,gray]{latex}}}]

       \draw[dotted] (-4.8,0.5) -- (-3.8,0.5);
      \draw[dotted] (4.2,0.5) -- (5.2,0.5);
  
       \draw[dotted,postaction={decorate}] (-1.8,0.5) --(-3.8,0.5) ;

        \draw[dotted,postaction={decorate}]  (0.2,0.5)--(-1.8,0.5);

  \draw[dotted,postaction={decorate}]
       (2.2,.5) --   ( .2,.5);

        \draw[dotted,postaction={decorate}]
   ( 4.2,0.5)--      (2.2,0.5);

\draw[gray] (1.1,.7) node {$t$}; 

\draw[gray] (3.1,.7) node {$t$}; 

\draw[gray] (-.9,.7) node {$t$}; 

\draw[gray] (-2.9,.7) node {$t$};

     \draw[dotted,postaction={decorate}]
   ( 0,0) to  [bend left=45]  (0.2,0.5);
        \draw[dotted,postaction={decorate}]
   ( 2,0) to  [bend left=45]  (2.2,0.5);
        \draw[dotted,postaction={decorate}]
   ( 4,0) to  [bend left=45]  (4.2,0.5);
        \draw[dotted,postaction={decorate}]
   ( -2,0) to  [bend left=45]  (-1.8,0.5);
        \draw[dotted,postaction={decorate}]
   ( -4,0) to  [bend left=45]  (-3.8,0.5);

\footnotesize
\draw[gray] (-.2,.28) node {$a^{-1}$}; 
\draw[gray] (-2.2,.28) node {$a^{-1}$}; 
\draw[gray] (-4.2,.28) node {$a^{-1}$}; 
\draw[gray] (1.8,.28) node {$a^{-1}$}; 
\draw[gray] (3.8,.28) node {$a^{-1}$}; 

\end{scope}
\end{tikzpicture}
 \caption{Part of the spanning tree $\mathcal S$ (shown in black) drawn inside $\Gamma(D_\infty,T)$, where $T=\{t,a,t^{-1},a^{-1}\}$. Note $a=a^{-1}$ and $ata=aaba=ba=t^{-1}$. }
 \label{fig:spineDihedral}
 \end{figure}
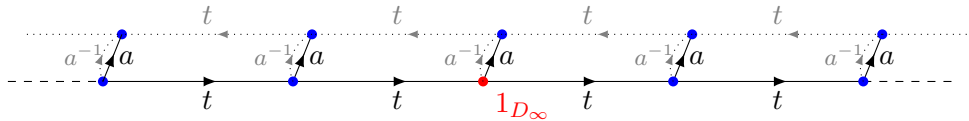

We borrow some terminology from the lamplighter groups $\Z_n \wr \Z$ to describe elements of $G \wr H$.
An element $v \in G \wr H$ can be thought of in two equivalent ways:
\be
    \item algebraically, as an element $(\gamma,h)$ where $\gamma \in \bigoplus_{k \in H} (G)_k$ has finitely many nontrivial entries and $h \in H$.
    \item geometrically, as a copy of $\mathcal S$ (or $ \Gamma(H,T)$) where each vertex is marked by some element of $G$, with all but finitely many  vertices marked by $1_G$, and the vertex $h$ of $\mathcal S$ is also marked with a {\em pointer} indicating the final position of the ``lamplighter''. We refer to this marking  as a {\em configuration} of ${\mathcal S}$.
\ee

Write $v = (\gamma,h)$ where
$\gamma \in \bigoplus_{h_i \in H} (G)_{h_i}$ and $h\in H$.
Since every element $h$ of $H$ can be written uniquely as $h = t^kx_q$ for some $k\in \Z$ and $0\leq q\leq m$, the map 
 $\xi:H \rightarrow \Z$ defined by $\xi(t^kx_q)=k$ is well defined.
Then the vertex corresponding to $h \in H$ is an endpoint of a spoke attached to the vertex $t^{\xi(h)}$.
For $v = (\gamma,h)$ with $\gamma$ as above, let
\be
    \item $k_* = \xi(h)$,
    \item $k_1 = \min\{0,\xi(h_i) \mid  (g)_{h_i} \in \gamma , (g)_{h_i}\neq 1_G\}$, and
    \item $k_2 = \max\{0,\xi(h_i) \mid  (g)_{h_i} \in \gamma, (g)_{h_i}\neq 1_G \}$.
\ee
Additionally, let $m_1 = \min(k_*,k_1)$ and $m_2 = \max(k_*,k_2)$.
Define the {\em integer support} of $v$, 
denoted $\sppt(v)$,  to be the interval $[m_1,m_2]$.
The left endpoint of the integer support is the smallest $k$ so that either
\be
    \item $v$ has a nontrivial entry among the copies of $\Gamma(G,S_0)$ attached to the spine 
    at the vertex $t^k x_i$ for some 
    $0 \leq i \leq m$, 
    \item the final position of the lamplighter is $t^kx_i$ for some $0 \leq i \leq m$, or
    \item all of $k_*$ and $\xi(h_i)$ are positive, that is, the lamplighter is never in a position along the spine with negative index, so $m_1=0$ denotes the starting position of the lamplighter.
\ee
The right endpoint of the integer support is defined analogously, where the $0$ is included in the definition of $k_2$ to account for the possibility that $k_*$ and all the $\xi(h_i)$ are negative.

To define our normal form, we
mimic the standard ``left-first'' representation of elements of the lamplighter group $\Z_n \wr \Z$ (cf.\   \cite{Taback03}).
Given $v = (\gamma,h) \in G \wr H$, we describe a path traversed by the lamplighter from the vertex $1_H$ in ${\mathcal S}$ to its final vertex $h \in {\mathcal S}$.
If $m_1<0$, the lamplighter first moves left along the spine of ${\mathcal S}$ to the vertex labeled $t^{m_1}$, and marks it with a possibly trivial element of $G$.
The lamplighter then visits $t^{m_1}x_1$ and marks it with a possibly trivial element of $G$ and returns to $t^{m_1}$.
This procedure is repeated for the vertices $t^{m_1}x_2, \cdots ,t^{m_1}x_m$.
The lamplighter then proceeds to the vertex corresponding to $t^{m_1+1}$ and repeats the process of visiting the vertex at the end of each spoke in order and marking it with a possibly trivial element of $G$.
This continues until the lamplighter reaches the vertex corresponding to $t^{m_2}$, where the process is repeated one last time.
If    $m_1 = 0$, 
the lamplighter begins the process of marking the vertices with possibly trivial elements of $G$ at $1_H \in {\mathcal S}$, and then visits the spokes as described above, until it reaches the vertex labeled $t^{m_2}$ and marks the vertices $t^{m_2}x_j$ for $0 \leq j \leq m$ with possibly trivial elements of $G$.

We refer to the subpath which starts at the vertex 
$t^{m_1}$
and ends at the vertex $t^{m_2}$ after having marked the vertices $t^{i}x_j$ for $m_1\leq i\leq m_2,0 \leq j \leq m$ with possibly trivial elements of $G$
as the {\em positive path}, because when written as a word in the group generators, the exponents of $t$ are all positive.
See Figure~\ref{fig:spine2} for an example of a configuration with integer support $[-2,1]$ where the positive path is  marked.

\begin{figure}[h!]
  \centering
  \begin{tikzpicture}[scale=1.2]

 \begin{scope}[decoration={markings,mark = at position 1 with {\arrow[scale=1.25,red]{latex}}}]

   \draw[postaction={decorate},red,thick] (-2.4,-1.45) -- (-2.4,-1.15);
       \end{scope}

 \begin{scope}[ decoration={markings,mark = at position 0.6 with {\arrow[scale=1.25,black]{latex}}}]
  
       \draw[dashed] (-5,0) -- (-4,0);
      \draw[dashed] (4,0) -- (5,0);
      
               \draw[postaction={decorate}]
         (2,0) -- ( 4,0);

      \draw (-4.5,.5) node {$x_2$}; 
\draw[red] (-4.6,1) node {$g_1$}; 
\draw (-4.5,-.5) node {$x_3$}; 
\draw (-3.5,.5) node {$x_1$}; 
\draw (-3.5,-.5) node {$x_4$};  
\draw[red] (-3.35,-1) node {$g_2$}; 

\draw (3.1,.2) node {$t$}; 
\draw (4.5,.5) node {$x_1$};  
\draw (4.5,-.55) node {$x_4$}; 
\draw (3.5,.5) node {$x_2$}; 
\draw (3.5,-.5) node {$x_3$};

               \draw[postaction={decorate}]
         (4,0) -- ( 4.4,1);
               \draw[postaction={decorate}]
         (4,0) -- ( 4.4,-1);
               \draw[postaction={decorate}]
         (4,0) -- ( 3.6,1);
               \draw[postaction={decorate}]
         (4,0) -- ( 3.6,-1);
            \fill[radius=1.5pt,blue]
    (4.4,1) circle ;
       \fill[radius=1.5pt,blue]
    (3.6,1) circle ;
       \fill[radius=1.5pt,blue]
    (4.4,-1) circle ;
   \fill[radius=1.5pt,blue]
    (3.6,-1) circle ;

          \end{scope}

 \begin{scope}[thick, color=green4, decoration={markings,mark = at position 0.6 with {\arrow[scale=1.25,green4]{latex}}}]

      \draw[postaction={decorate}]
         (-4,0) -- ( -4.4,1);
               \draw[postaction={decorate}]
         (-4,0) -- ( -4.4,-1);
               \draw[postaction={decorate}]
         (-4,0) -- ( -3.6,1);
               \draw[postaction={decorate}]
         (-4,0) -- ( -3.6,-1);

       \draw[postaction={decorate}] (-4,0) -- (-2,0);

        \draw[postaction={decorate}] (-2,0) -- (0,0);

  \draw[postaction={decorate}]
         (0,0) -- (2,0);

\draw (.5,.5) node {$x_1$}; 
\draw (.55,-.55) node {$x_4$}; 
\draw (-.5,.5) node {$x_2$}; 
\draw (-.5,-.5) node {$x_3$}; 
\draw (1.1,.2) node {$t$}; 

\draw (2.5,.5) node {$x_1$};  
\draw[red] (2.65,1) node {$g_4$};
\draw[red] (2.3,-.2) node {$g_3$};
\draw (2.5,-.55) node {$x_4$}; 
\draw (1.5,.5) node {$x_2$}; 
\draw (1.5,-.5) node {$x_3$};


\draw (-1.1,.2) node {$t$}; 
\draw (-2.5,.5) node {$x_2$}; 
\draw (-2.5,-.5) node {$x_3$}; 
\draw (-1.5,.5) node {$x_1$}; 
\draw (-1.5,-.5) node {$x_4$}; 
\draw (-3.1,.2) node {$t$};

      \draw[postaction={decorate}]
         (0,0) -- ( .4,1);
               \draw[postaction={decorate}]
         (0,0) -- ( .4,-1);
               \draw[postaction={decorate}]
         (0,0) -- ( -.4,1);
               \draw[postaction={decorate}]
         (0,0) -- ( -.4,-1);
   \fill[radius=1.5pt,blue]
    (.4,1) circle ;
       \fill[radius=1.5pt,blue]
    (-.4,1) circle ;
       \fill[radius=1.5pt,blue]
    (.4,-1) circle ;
   \fill[radius=1.5pt,blue]
    (-.4,-1) circle ;

               \draw[postaction={decorate}]
         (2,0) -- ( 2.4,1);
               \draw[postaction={decorate}]
         (2,0) -- ( 2.4,-1);
               \draw[postaction={decorate}]
         (2,0) -- ( 1.6,1);
               \draw[postaction={decorate}]
         (2,0) -- ( 1.6,-1);
            \fill[radius=1.5pt,red]
    (2.4,1) circle ;
       \fill[radius=1.5pt,blue]
    (1.6,1) circle ;
       \fill[radius=1.5pt,blue]
    (2.4,-1) circle ;
   \fill[radius=1.5pt,blue]
    (1.6,-1) circle ;

      \draw[postaction={decorate}]
         (-2,0) -- ( -2.4,1);
               \draw[postaction={decorate}]
         (-2,0) -- ( -2.4,-1);
               \draw[postaction={decorate}]
         (-2,0) -- ( -1.6,1);
               \draw[postaction={decorate}]
         (-2,0) -- ( -1.6,-1);
                     \fill[radius=1.5pt,blue]
    (-2.4,1) circle ;
       \fill[radius=1.5pt,blue]
    (-1.6,1) circle ;
       \fill[radius=1.5pt,red]
    (-2.4,-1) circle ;
   \fill[radius=1.5pt,blue]
    (-1.6,-1) circle ;

            \fill[radius=1.5pt,red]
    (-4.4,1) circle ;
       \fill[radius=1.5pt,blue]
    (-3.6,1) circle ;
       \fill[radius=1.5pt,blue]
    (-4.4,-1) circle ;
   \fill[radius=1.5pt,red]
    (-3.6,-1) circle ;

   \fill[radius=1.5pt,blue]
    (0,0) circle node[below right =1pt] {$1_H$};

   \fill[radius=1.5pt,red]
    (2,0) circle ; 

   \fill[radius=1.5pt,blue]
    (-2,0) circle ;

   \fill[radius=1.5pt,blue]
    (-4,0) circle ;

     \end{scope}
\end{tikzpicture}
 \caption{The element
 $t^{-2}x_2g_1x_2^{-1}x_4g_2x_4^{-1}t^3g_3x_1g_4x_1^{-1}t^{-2}x_3$
 as a configuration of $\mathcal S$. The integer support of this element is $[-2,1]$ and the \textcolor{red}{red} arrow denotes the final position of the lamplighter. The positive path for this element is 
 $x_1 x_1^{-1}x_2g_1x_2^{-1}x_3x_3^{-1}x_4g_2x_4^{-1}(tx_1x_1^{-1}x_2x_2^{-1}x_3x_3^{-1}x_4x_4^{-1})^2tg_3x_1g_4x_1^{-1}x_2x_2^{-1}x_3x_3^{-1}x_4x_4^{-1}$ 
  is shown in \textcolor{green4}{green}.}
 \label{fig:spine2}
 \end{figure}

Upon completing the positive path, one of two things will occur.  It may be that the lamplighter is in its final position, and the path simply ends.
If not, the lamplighter moves to its final position via a subpath of the form $t^k$ or  $t^kx_q$ where $k\in \Z, k\leq 0$.
Note that since $m_2$, the right endpoint of $\sppt(v)$, is the maximum of $k_2$ and $k_*$, the lamplighter will never be in a position along $\Z = \langle t \rangle$ to the right of $m_2$, so the exponent $k$ is non-positive.

As the lamplighter travels along its positive path, we  will wish to  indicate two special positions: the first time the lamplighter is at the vertex corresponding to $1_H=t^0$, and the first time the lamplighter is at the vertex which will be its final position.
The integer support and the positive path are defined so that these are unique positions along the positive path.

The normal form for the Cayley automatic structure on $G \wr H$ will be constructed in stages.
We first define a normal form $\mathcal N_0\subseteq (\Lambda_0\cup T)^*$ for elements of $G\wr H$ as follows.  Given $v \in G\wr H$ with $\sppt(v) = [m_1,m_2]$, the above description allows us to uniquely represent $v$ as a word either of the form
\begin{equation}
\label{eqn:nf}
v=t^nx_q\ \text{  or  }\
v=t^{m_1}v_1tv_2\dots tv_st^jx_q
\end{equation}
where $n,j,q, s \in\Z, \ j\leq 0$,
$s\geq 1,  \ 0\leq q\leq m$,  $m_1+(s-1)=m_2$
and
\begin{equation}
\label{eqn:nf2}
v_\cC=v_{\cC,0}\h_1v_{\cC,1}\h_1^{-1}\h_2v_{\cC,2}\dots \h_{m-1}^{-1}\h_mv_{\cC,m}\h_m^{-1}
\end{equation}
with $v_{\cC,t} \in L_0$.  If $m_1 = k_*$ then we allow $v_1$ to be trivial, otherwise $v_1$ must be nontrivial.
If $m_2 = k_*$ we allow $v_\cC$ to be trivial, otherwise $v_\cC$ must be nontrivial.
Each word $v_\cC$ encodes a sequence of words $(v_{\cC,0},\dots, v_{\cC,m})\in  L_0^{m+1}$ with $\psi_0(v_{\cC, 0})$ labeling the vertex at position $t^{m_1+\cC-1}$ in $\mathcal S$
and $\psi_0(v_{\cC, i})$ labeling the end of the spoke at position  $t^{m_1+\cC-1}x_i$ for $1 \leq i \leq m$.
Note that in Equation  \eqref{eqn:nf}, the $v_\cC$ are separated by instances of $t$ as the lamplighter moves along the positive path.
Let ${\mathcal N}_0\subseteq (\Lambda_0\cup T)^*$ denote the set of words of this form.

For example,
the element in
 Figure~\ref{fig:spine2}
 has $\mathcal N_0$ normal form
    \begin{equation*}
   \begin{split}
t^{-2}x_1x_1^{-1}x_2v_{1,2}x_2^{-1}x_3x_3^{-1}x_4v_{1,4}x_4^{-1}
 tx_1x_1^{-1}x_2x_2^{-1}x_3x_3^{-1}x_4x_4^{-1}
 tx_1x_1^{-1}x_2x_2^{-1} \\
 x_3x_3^{-1}x_4x_4^{-1}
  tv_{4,0}x_1v_{4,1}x_1^{-1}x_2x_2^{-1}x_3x_3^{-1}x_4x_4^{-1}
 t^{-2}x_3
   \end{split}
   \end{equation*}  where $\psi_0(v_{1,2})=g_1, \psi_0(v_{1,4})=g_2$, $\psi_0(v_{4,0})=g_3$, $\psi_0(v_{4,1})=g_4$, and  in all other cases, $v_{i,j}=\varepsilon$ where $\psi_0(\varepsilon)=1_G$.

Next, we  insert special symbols into the words in $\mathcal N_0$ to obtain the intermediate language $\mathcal N_1$.

Let
$\Lambda_1=\Lambda_0\cup T\cup\{B,C, B_0,C_*\}$ and
$\Lambda=\Lambda_0\cup\{B,C, B_0,C_*\}$.
Let $v = (\gamma,h) \in G \wr H$ be written in the form of Equation~\eqref{eqn:nf}.
Notice that all terms of the form $v_\cC$ are part of the positive path.
With $v_\cC$ as in Equation~\eqref{eqn:nf2}, before each $v_{\cC,j}$ we place the symbol $C$, with one exception.
If $v_{\cC,j}$ is the label of the vertex $h$ of {${\mathcal S}$} which is the final position of the lamplighter, then precede $v_{\cC,j}$ by the symbol $C_*$.
Before each term $v_\cC$ we place the symbol $B$, with one exception.
If $m_1+\cC-1 = 0$ we place the symbol $B_0$ in front of $v_{\cC}$, indicating the unique position along the positive path where the lamplighter is at  the vertex $1_H \in {\mathcal S}$.

Let ${\mathcal N}_1\subseteq \Lambda_1^*$ denote the set of all
words in ${\mathcal N_0}$ where the symbols $\{B,C, B_0,C_*\}$ have been inserted as described.
The word  in $\mathcal N_1$ for
the element in
 Figure~\ref{fig:spine2}
is then
\begin{equation*}
  \begin{split}
t^{-2}BCx_1Cx_1^{-1}x_2Cv_{1,2}x_2^{-1}x_3Cx_3^{-1}x_4Cv_{1,4}x_4^{-1}
 tBCx_1Cx_1^{-1}x_2Cx_2^{-1}x_3C_*x_3^{-1} \\
 x_4Cx_4^{-1} tB_0Cx_1Cx_1^{-1}x_2Cx_2^{-1}x_3Cx_3^{-1}x_4Cx_4^{-1}
  tBCv_{4,0}x_1Cv_{4,1}x_1^{-1}x_2Cx_2^{-1} \\ x_3Cx_3^{-1}
  x_4Cx_4^{-1}
 t^{-2}x_3.
   \end{split}
   \end{equation*}

To obtain the final normal form which will be the basis of the Cayley automatic structure for $G \wr H$, let ${\mathcal N}\subseteq \Lambda^*$ denote the set of words in ${\mathcal N}_1$ where all instances of the letters $t, \h_1,\dots,\h_m, t^{-1}, \h_1^{-1}, \dots, \h_m^{-1}$ from the set $T$ are removed.
The word in $\mathcal N$ for
the element in
 Figure~\ref{fig:spine2}
is then
    \begin{dmath*}
BCCCv_{1,2}CCv_{1,4}BCCCC_*C
 B_0CCCCC
  BCv_{4,0}Cv_{4,1}CCC.
   \end{dmath*}

Define the language
\begin{dmath}\label{product}
L_1=
 \left\{\prod_{i=1}^p\left(\beta_i\Gamma_{i,0}v_{i,0}\Gamma_{i,1}v_{i,1}\Gamma_{i,2}v_{i,2}\dots \Gamma_{i,m}v_{i,m}\right)\ \middle|\ \begin{array}{ll}
 v_{i,j}~\in L_0,\\
\beta_i~\in\{B,B_0\}, \\ \Gamma_{i,j}~\in \{C,C_\ast \},\\
p\geq 1\end{array}\right\}.
\end{dmath}
As  $L_0$ is a regular language, it follows that $L_1$ is a regular language.

Recall that when $v \neq t^kx_q$,
if $m_1 = k_*$ we allow $v_1$ to be trivial, otherwise $v_1$ must be nontrivial,
and if $m_2 = k_*$ we allow $v_s$ to be trivial, otherwise $v_s$ must be nontrivial.
These conditions are easily verified by a finite state automaton  inspecting, respectively, the first and last expressions in the product representing an element of $L_1$ according to the following rules:
\begin{enumerate}
    \item if $C_*$ occurs in the first factor in the product as expressed in Equation~\ref{product}, then all $v_{i,j}$ may be $\varepsilon$ for $0 \leq j \leq m$; if not, at least one $v_{i,j}$ must be nontrivial.
    \item if the $C_*$ occurs in the last factor in the product as expressed in Equation~\ref{product}, then all $v_{i,j}$ may be $\varepsilon$ for $0 \leq j \leq m$; if not, at least one $v_{i,j}$ must be nontrivial.
\end{enumerate}
Note that a  finite state automaton can also easily  verify that when all $v_\cC$ are trivial, we have a normal form corresponding to $t^kx_q$.  Let $L_2\subseteq L_1$ be the set of all strings in $L_1$ for which all three of these conditions are satsified.
Since these conditions are easily checked by a finite state automaton, and $L_1$ is a regular language, 
 it follows that $L_2$ is a regular language.

Finally, we verify that the string has only one occurrence each of $B_*$ and $C_*$. Let
\[ L=L_2\cap \{pB_0qC_\ast r, pC_\ast qB_0r\mid p,q,r\in (\Lambda\setminus\{B_0,C_*\})^*\} . \]
It follows that $L$ is regular and that $L = {\mathcal N}$.

As a further example,
note that if $v = t^nx_q$,
the corresponding word in $L$ is  as follows:
\be
\item when $n> 0$, we have $\sppt(t^nx_q) = [0,n]$
and the corresponding  word is \[B_0C^{m+1}(BC^{m+1})^{n-1}B C^qC_*C^{m-q};\]

\item when $n=0$, we have $\sppt(x_q) = [0,0]$
and the corresponding word is \[B_0 C^qC_*C^{m-q};\]

\item when $n < 0$, we have $\sppt(t^nx_q) = [n,0]$ and the corresponding  word is \[B
C^qC_*C^{m-q}
(BC^{m+1})^{n-1}B_0C^{m+1}.\]
\ee

Given a word $\sigma \in L$, the symbols $B_0$ and $C_*$ allow us to reconstruct the integer support of the corresponding element, as well as the final position of the lamplighter, that is, the coordinate $h$.
The words $v_{i,j}$ correspond (via $\psi_0$) to elements of $G$ listed in a specified order.  That is, we can deterministically reconstruct $\gamma \in \bigoplus_{h \in H} (G)_h$ and $h \in H$ from $\sigma$.
Formally, let $\psi\colon L \rightarrow G\wr H$ be the bijective map defined by
\begin{equation*}
\psi(w) = \psi\left(\prod_{k=1}^s\left(\beta_k\Gamma_{k,0}u_{k,0}\Gamma_{k,1}u_{k,1}\Gamma_{k,2}u_{k,2}\dots \Gamma_{k,m}u_{k,m}\right)\right)  = t^{m_1}p_1tp_2\dots tp_st^j
\end{equation*}
where
\[p_i=\psi_0(v_{i,0})\h_1\psi_0(v_{i,1})\h_1^{-1}\h_2\psi_0(v_{i,2})\dots \h_{m-1}^{-1}\h_m\psi_0(v_{i,m})\h_m^{-1}, \]
with $u_{i,j} = v_{i,j}$, $\beta_k \in \{B,B_0\}$, $\Gamma_{k,j} \in \{C,C_*\}$ and $m_1$ calculated from the positions of $B_0$ and $C_*$ as described above.

We claim that $(S,\Lambda,L,\psi)$ is a Cayley automatic structure for $G \wr H$.
To prove this,
we must show that for every generator $s \in S=S_0\cup T$ the set 
\[R_s = \{(u,v) \in L \times L | \psi(u)s =_{G \wr H} \psi(v) \}\] is a regular language, that is, recognised by a 2-tape synchronous automaton.
It suffices to do this for $s \in S_0 \cup \{x_1, \cdots ,x_m,t\}$; see, for example, \cite[Lemma 9]{cga}.

First let $s \in S_0$ and 
suppose $(u,v) \in R_s$.
Viewing $\psi(u)$ as a {configuration} of ${\mathcal S}$ with finitely many vertices marked with elements of $G$
and a distinguished position for the lamplighter,
we can easily see the effect of multiplication  by $s$ on the normal form.
Let $t^kx_q$ denote the vertex of ${\mathcal S}$ which is the final position of the lamplighter in $u$, marked by the element  $g_u \in G$. Let $\rho_u\in L_0$ be such that $\psi_0(\rho_u)=g_u$.
To obtain the normal form word for $\psi(u)s$ we simply multiply $\rho_u$ by $s$ and verify that the multiplication is correct using the multiplier automaton $\texttt{M}_s$ given as part of the given Cayley automatic structure on $G$.
Therefore we need to accept pairs of strings $(u,v) \in L \times L$ of the following form:
\begin{dmath*}
u = \left(\Pi_{i=1}^{p}  \beta_i \Pi_{j=0}^m C \alpha_{i,j}\right)
\Theta_u
\left(\Pi_{i=p+2}^{\cC}  \beta_i   \Pi_{j=0}^m C \alpha_{i,j}\right)
\end{dmath*}
and
\begin{dmath*}
v = \left(\Pi_{i=1}^{p}  \beta_i   \Pi_{j=0}^m C \alpha_{i,j}\right)
\Theta_v
\left(\Pi_{i=p+2}^{\cC}  \beta_i  \Pi_{j=0}^m C \alpha_{i,j}\right)
\end{dmath*}
where $\beta_i \in \{B,B_0\}$, $\alpha_{i,j} \in L_0$,
\[
\Theta_u =  \beta_{p+1}  C\alpha_{p+1,0} \cdots C_* \bm{\alpha_{p+1,r}} \cdots C\alpha_{p+1,m}
\]
and
\[
\Theta_v = \beta_{p+1}  C\alpha_{p+1,0} \cdots C_* \bm{\alpha'_{p+1,r}} \cdots C\alpha_{p+1,m}
\]
where $(\alpha_{p+1,r},\alpha'_{p+1,r})$ is accepted by the multiplier automaton $\texttt{M}_s$ given as part of the given Cayley automatic structure on $G$.  The bold highlighted symbols represent the only difference between the two words.

By
\cite{KKMjournal} (see also \cite[Lemma 8]{cga})  the  language $L_0$ is necessarily quasigeodesic.
It follows that the difference between the lengths of
$\alpha_{p+1,r}$ and $ \alpha'_{p+1,r}$ is uniformly bounded.
As it is regular to check that two words are identical with a bounded shift, 
it follows that we can construct a 2-tape automaton which checks that the prefixes  of $u$ and $v$ are identical, then calls   $\texttt{M}_s$ to read $(\alpha_{p+1,r},\alpha'_{p+1,r})$, and finally checks that the suffixes of $u$ and $v$ are identical (with a bounded shift).
Thus 
 $R_s$ is a regular language.

Next let $x_i \in \{x_1, \cdots ,x_m\}$, and 
suppose $(u,v) \in R_{x_i}$.
Writing $\psi(u)$ as in Equation~\eqref{eqn:nf}, we see that $\psi(u)x_i$ ends in the letters $x_qx_i$.
The product $x_qx_i \in H$ is an element of some right coset $\langle t \rangle x_r$.  That is, $x_qx_i = t^kx_r$ for some $k$ and $r$.
Viewing $\psi(u)$ and $\psi(v)$ as configurations in ${\mathcal S}$, this means that the configurations are identical except for the final position of the lamplighter which is indicated by $C_*$ in the normal form.
Note that as $x_q$ and $x_i$ vary among the finite set of coset representatives, there are only a finite number of possible values of $(k,r)$ which arise.

The elements $\psi(u)$ and $\psi(v)$ may or may not have identical integer support. For example 
if  $\psi(u)=t^{-10}g_1t^{20}g_2t^{-5}x_q$ and $x_qx_i=t^{-7}x_r$ then $\sppt(\psi(u))=\sppt(\psi(v))$, 
whereas if  $\psi(u)=t^{-10}g_1t^{20}g_2t^{-5}x_q$ and $x_qx_i=t^{-17}x_r$ then $\sppt(\psi(u))\neq\sppt(\psi(v))$.

If $\sppt(\psi(u)) = \sppt(\psi(v))$,  then we simply need to check the two strings are identical except for the location of $C_*$.  
If $\psi(u)$ ends in $x_q$ when written as in Equation~\eqref{eqn:nf},  we have  $x_qx_i = t^kx_r$.  
Let $\pi\colon\Lambda^*\to \{C,C_*,B_0\}^*$ be a homomorphism which is the identity on $C,C_*,B_0$ and sends all other letters to $\varepsilon$. 
Then $\pi(u)$ and $\pi(v)$ are identical strings except for the location of $C_*$ in each string. Observe the letter $B_0$ is in the same position in each string since the integer supports of $\psi(u)$ and $\psi(v)$ are the same.
Further observe that there exists an integer  $s_{q,i}$ such that for every pair $(u,v)\in R_{x_i}$ which have the same integer support, if $C_*$ is the $x$th letter of $\pi(u)$ and the $y$th letter of $\pi(v)$, then $x-y=s_{q,i}$.

 Consider the language $X_{q,i}\subseteq \{C,C_*,B_0\}^*\times  \{C,C_*,B_0\}^*$
consisting of all pairs of strings, each of which contains exactly one $C_*$ letter and one $B_0$ letter, 
where  $C_*$ is the $x$th letter of the first string and the $y$th letter of the second string with $x-y=s_{q,i}$,
and $B_0$ is in the same position in both strings.  
Since these conditions are regular to check, $X_{q,i}$ is a regular language.

Let \[\kappa\colon \Lambda^*\times \Lambda^*\to  \{C,C_*,B_0\}^*\times  \{C,C_*,B_0\}^*\] be the map which  in each coordinate is the identity on  $C,C_*$ and $B_0$ and sends all other letters  to $\varepsilon$.  
Let $Y \subseteq \Lambda^* \times \Lambda^*$ be the 
language consisting of all pairs of strings 
such that for every positive integer $z$ the $z$th letter of the first
string and the second string is the same unless one of these 
letters is $C_*$ and the other is $C$. The language $Y$ is regular.    
Then the language \[\kappa^{-1}(X_{q,i})\cap \left(L\times L\right) \cap Y\] is regular, and the union of these languages for $0\leq q\leq m+1$   is exactly the subset of $R_{x_i}$ for which multiplication by $x_i$ does not change the integer support for the first entry.

Now consider all the possible ways that the integer support of $\psi(u)$ can change upon multiplication by $x_i$. Again assume $\psi(u)$  ends in $x_q$ when written as in Equation~\eqref{eqn:nf}, and
$x_qx_i = t^kx_r$.  We must consider the following cases.

\begin{enumerate}\item $  \psi(u)=t^nx_q$ and $k\neq 0$,
\item $\psi(u)=t^{m_1}v_1tv_2\dots tv_st^jx_q$ with  $\sppt(\psi(u))=[m_1,m_2], j\leq 0$ and
\begin{enumerate}\item $k>-j$; in this case  the integer support  of $v$ extends  further to the right of $m_2$,
\item $k<m_1-m_2-j$; in this case the  integer support of $v$  extends  further to the left of $m_1$.
\end{enumerate}

\end{enumerate}
Each of these cases can be handled in a manner similar to the above case, by considering the relative positions of $C_*$ and $B_0$ in $\pi(u),\pi(v)$. 
For the first case, if $n>0,k>-n$ then 
 \begin{equation*}
    \begin{split}
    u= B_0C^{m+1}(BC^{m+1})^{n-1}B C^qC_*C^{m-q} \\\text{and } v=B_0C^{m+1}(BC^{m+1})^{n+k-1}B C^rC_*C^{m-r};  \end{split}
    \end{equation*}
if $n>0,k<-n$ then
 \begin{equation*}
    \begin{split}
    u  =  B_0C^{m+1}(BC^{m+1})^{n-1}B C^qC_*C^{m-q}  \\ \text{and } v = B
C^rC_*C^{m-r}
(BC^{m+1})^{-k-n-1}B_0C^{m+1}
    \end{split}
    \end{equation*}
 Analogous pairs of expressions can be worked out for $n\leq 0$; clearly all such pairs can be recognised by 2-tape automata  since $q,i,k,r$ are fixed.
We leave details of the remaining cases to the reader.

Finally, 
suppose $(u,v) \in R_{t}$.
Writing $\psi(u)$ as in Equation~\eqref{eqn:nf}, we see that $\psi(u)t$ ends in the letters $x_qt$. Once again, we can consider the case where the integer support of $\psi(u)$ does not change, in which case we merely need to check the location of the $C_*$ letters in each word, 
and separately the case where the integer support of $\psi(u)$ differs at one endpoint from the integer support of $\psi(v)$.
We follow the same reasoning as in the previous case of multiplication by $x_i$; note that $x_qt$ is in some right coset of $\Z$ in $H$, so we can write $x_qt = t^kx_r$, for a possibly different coset representative $x_r$.
We can  therefore show that  $R_t$ is a regular language as well.
The regular languages $R_s$, $R_{x_i}$ and $R_t$ complete the construction of the Cayley automatic structure
$(S,\Lambda,L,\psi)$ for $G \wr H$. 
\end{proof}

\bibliographystyle{plain}
\bibliography{BET_bibliography}

\begin{thebibliography}{1}

\bibitem{BerK-BS}
Dmitry Berdinsky and Bakhadyr Khoussainov.
\newblock On automatic transitive graphs.
\newblock In {\em Developments in language theory}, volume 8633 of {\em Lecture
  Notes in Comput. Sci.}, pages 1--12. Springer, Cham, 2014.

\bibitem{berdkhouss15}
Dmitry Berdinsky and Bakhadyr Khoussainov.
\newblock Cayley automatic representations of wreath products.
\newblock {\em International Journal of Foundations of Computer Sceince},
  27(2):147--159, 2016.

\bibitem{Taback18}
Sophie B\'{e}rub\'{e}, Tara Palnitkar, and Jennifer Taback.
\newblock Higher rank lamplighter groups are graph automatic.
\newblock {\em J. Algebra}, 496:315--343, 2018.

\bibitem{Taback03}
Sean Cleary and Jennifer Taback.
\newblock Dead end words in lamplighter groups and other wreath products.
\newblock {\em The Quarterly Journal of Mathematics}, 56(2):165--178, 2005.

\bibitem{cga}
Murray Elder and Jennifer Taback.
\newblock {$\mathcal C$}-graph automatic groups.
\newblock {\em J. Algebra}, 413:289--319, 2014.

\bibitem{Epsteinbook}
David B.~A. Epstein, James~W. Cannon, Derek~F. Holt, Silvio V.~F. Levy,
  Michael~S. Paterson, and William~P. Thurston.
\newblock {\em Word Processing in Groups}.
\newblock Jones and Barlett Publishers. Boston, MA, 1992.

\bibitem{KKMjournal}
Olga Kharlampovich, Bakhadyr Khoussainov, and Alexei Miasnikov.
\newblock From automatic structures to automatic groups.
\newblock {\em Groups Geom. Dyn.}, 8(1):157--198, 2014.

\end{thebibliography}

\end{document}